\newtheorem{theorem}{Theorem}[section]
\newtheorem{proposition}[theorem]{Proposition}
\theoremstyle{definition}
\theoremstyle{remark}
\numberwithin{equation}{section}
\newcommand{\PI}{\mathrm{P}_\mathrm{I}}
\renewcommand{\PII}{\mathrm{P}_\mathrm{II}}
\renewcommand{\span}{\mathrm{span}}
\newcommand{\conv}{\mathrm{conv}}
\newcommand{\Q}{\mathbb{Q}}
\newcommand{\X}{\mathrm{X}}
\newcommand{\B}{\mathbf{B}}
\newcommand{\cf}{\mathrm{cf}}
\begin{document}

\title[Open-point games and productivity of dense-separability]{Open-point topological games and productivity of dense-separable property}

\author{Jarno Talponen}
\address{University of Eastern Finland, Institute of Mathematics, Box 111, FI-80101 Joensuu, Finland}
\email{talponen@iki.fi}

\keywords{open-point game, dense-separable, L space, S space, hereditarily separable, productivity, product space, weight, $\pi$-weight}
\subjclass[2010]{54A25; 54D70; 91A44, 46B26, 03E60}
\date{\today}

\begin{abstract}
In this note we study the open-point topological games in order to analyze the least upper bound for density of dense subsets of a topological space. This way we may also 
analyze the behavior of such cardinal invariants in taking products of spaces. Various related cardinal equalities and inequalities are given. 
As an application we take a look at Banach spaces with the property (CSP) which can be formulated by stating 
that each weak-star dense linear subspace of the dual is weak-star separable.  
\end{abstract}

\maketitle

\section{Introduction}

Following \cite{LM} we call a topological space dense-separable if each of its dense subsets is separable (cf. \cite{BJ2}). 

It is known that (see \cite{JS}) that for compact spaces the dense-separability is equivalent to the countable $\pi$-weight. 
It follows that the property of being compact dense-separable space is preserved in countable products.
These considerations also involve the $\mathrm{S}$ and $\mathrm{L}$ spaces (see \cite{BJ}, \cite[Ch.11]{KV}). 

However, the problem whether a finite product of dense-separable topological spaces must be dense-separable appears not have been settled. This problem is the starting point and the main motivation in this paper.
The above problem is also closely related to some questions involving the structure of Banach spaces 
(see the discussion at the end, cf. \cite{C, Pol, talponen}).

The preservation of a given property of topological spaces in finite products is a notoriously delicate problem, see \cite{ss}.
In fact, often it is the case that a product of two spaces may fail to have that property even if both the spaces have the property, but the property of the product 
can be recovered by requiring one of the spaces to satisfy a stronger form of the given property. 

The main point of this paper is that a kind of stronger form of dense-separability which can be formulated by means of a open-point game 
is productive in a way, provided that the product enjoys a suitable tightness condition. The role of this tightness condition is to enable a `product strategy', a kind of Cartesian product of winning strategies involving the coordinate spaces.

Next some ideas behind the approach are very heuristically described.
Given a metric space $(X,d)$ there is an easy, natural and sparse way of recursively selecting a dense set, 
e.g. towards verifying the hereditary separability of the space. This is a kind of greedy algorithm.

First pick any point $x_0 \in X$. In the later stages $\beta>0$ one defines 
\[d_\beta := \sup \left\{0<r<1 \colon \exists x \in X\  B(x,r) \subset X \setminus  \bigcup_{\alpha<\beta} x_\alpha \right\}\]
and then chooses $x_\beta \in X$ such that $B(x_\beta ,d_\beta /2 ) \subset X \setminus  \bigcup_{\alpha<\beta} x_\alpha$. If the density of $X$ is $\kappa$ then 
the above construction results in a dense transfinite sequence $\{x_\beta \}_{\beta<\theta} \subset X$ with $\kappa\leq \theta < \kappa^+$.
Thus, in view of selecting a sparse dense transfinite sequence, the metric provides a quantitative description of `the order of importance' of different parts of the space in the process.

It appears reasonable trying to generalize this kind of description of `strategic importance' of different parts of the space to general topological setting. However, this case of course differs considerably from the metrizable 
case. One drastic difference is that (e.g. in a separable but non hereditarily separable space) there are some `key nodes' which are especially effective in the formation of dense subsets of the space.
Therefore topological spaces in general are not homogeneous in such a way that a simple description of the strategically important parts of the space, such as above, should appear plausible.  

The author first attempted a kind of grading of the topology and modeling a kind of monotone choice function involving the asymptotic order of importance, in a similar spirit as above. Formalizing these ideas turned out to be difficult. 
In fact, this is how the author discovered that the winning strategy of Player I in a open-point game yields the sought after asymptotic order of strategic importance of parts of the topological space.
The winning strategy also adapts to the points selected by Player II during the course of the game and this in part deals with the above inhomogeneity issue.

Thus the open-point game is like tailor-made for analyzing the subtle density-related properties of topological spaces. See \cite{aurichi_et, bella_et, DG, scheepers, zapletal} for some recent related work involving such games and cardinal invariants.

\subsection{Preliminaries and notations}
We refer to the monographs in the references for a suitable background information.
In what follows $\kappa$ stands for an infinite cardinal.
Recall that a $\pi$-basis of a topological space $X$ is a system of non-empty open sets $\{U_\gamma \colon \gamma \in \Gamma\}$ such that any non-empty open set $U \subset X$ contains some $U_\gamma$.
We denote by $\pi(X)$ the $\pi$-weight  of $X$, that is, the least cardinality of a $\pi$-basis of $X$. Similarly, the weight of a space is denoted by $w(X)$.
The density $\mathrm{d}(X)$ of a topological space $X$ is the least cardinality of a dense subset.
The tightness $t(x, X)$ of $X$ at a point $x\in X$ is the smallest cardinal $\kappa$ such that, whenever $x\in \overline{Y}$ for some subset $Y \subset X$, 
there exists a subset $Z \subset Y$, with $|Z | \leq \kappa$, such that $x\in \overline{Z}$. The tightness of $\X$ is $t(X)=\bigvee_{x\in X} t(x,X)$. 
We denote by 
\[\delta(T) = \bigvee \{\mathrm{d}(A)\colon A \subset T\ \mathrm{dense\ subset}\}\]
following e.g. \cite{BJ2}. For example, if $\delta(T) = \aleph_0$ then $T$ is dense-separable.
The closed unit ball of a Banach space $X$ is denoted by $\B_X$. Given a product 
$\prod_\gamma X_\gamma$ of topological spaces we denote the natural projection to 
a given coordinate space by $\pi_{X_\gamma}$. Unless otherwise sated, we will consider Cartesian products 
of topological spaces with the product topology.

\subsection{The game $G_{T,\theta}$}
Let $T$ be a topological space and let $\theta$ be an infinite ordinal. We will study a $2$-player perfect information game as in \cite{BJ} which we next recall. 
The game advances in stages indexed by ordinals. At each stage ($\alpha$ ordinal) Player I ($\PI$) chooses an open set $\emptyset \neq U_\alpha \subset T$ and then Player II ($\PII$) chooses a point $x_\alpha \in U_\alpha$. The game ends at the stage $\beta$ with $\overline{\bigcup_{\alpha<\beta} x_\alpha} =T$. In such a case we say that the game played has order type or length $\beta$.
If the game ends before the stage $\theta$, i.e. $\beta<\theta$ above, then $\mathrm{P}_\mathrm{I}$ wins, otherwise $\mathrm{P}_\mathrm{II}$ wins.
Thus the chances of $\mathrm{P}_\mathrm{I}$ to succeed improve as $\theta$ increases. This is a game of perfect information, so both the players know the ordinal 
representing the stage of the game as well as the whole history of the innings played in the game. 

In this game it will be beneficial for $\PI$ to always choose $U_\beta \subset T \setminus \overline{\bigcup_{\alpha<\beta} x_\alpha}$, so 
we could instead impose this alternative rule without changing any outcomes of the games. By a strategy of $\PI$ we mean a mapping
$s\colon  T^{<\theta} \to \tau\setminus \{\emptyset\}$ with the interpretation that once a transfinite sequence of points have been selected by $\PII$
the complement of the closure of the sequence is an open set containing the relevant information about the status of the game and then $s$ describes the next inning of $\PI$. The strategy of $\PII$ is a mapping $s\colon T^{<\theta} \times (\tau\setminus \{\emptyset\}) \to T$. Strictly speaking, the strategies involve only the sequences of innings played according to the rules.

Note that since $\PI$ can play redundant innings, if she has winning strategy $s$ in game $G_{T,\theta}$, then she has a winning strategy also in a version of the same game where $\PII$ must pick at least one point 
at a time (possibly uncountably many). 

Note that since 
\[\overline{\{x_\alpha \}}_{\alpha<\theta} = \overline{\overline{\{x_\alpha \}}_{\alpha<\beta} \cup \{x_\alpha \}_{\beta\leq \alpha<\theta}},\quad \beta<\theta\]
the strategies of both players could be alternatively formulated in terms of closures of the sequence of points selected, provided that information on the stage of the game is incorporated in the strategy,
cf. \cite{berner}.

We denote by $\mathrm{gd}(T)$ the cardinal invariant associated to space $T$ as follows: Let $\mathrm{gd}(T)$ be the least cardinal $\kappa$ such that 
$\mathrm{P}_\mathrm{I}$ has a winning strategy in the  game $G_{T,\kappa^+}$ (clearly $\mathrm{gd}(T) \leq |T|$).

\section{Results}

For the sake of convenience we will begin with some basic observations, some of which are likely known to the specialists in the topic, cf. \cite{berner,BJ, juhasz}.

\subsection{Basic observations}

\begin{proposition}\label{thm: ineq}
For any topological space $T$ we have
\[\delta(T) \leq \mathrm{gd}(T) \leq \pi(T).\]
In particular, if $T$ has a countable $\pi$-basis, then $\mathrm{P}_\mathrm{I}$ has a winning strategy in  $G_{T,\omega_1 }$, and this condition in turn 
implies that $T$ is dense-separable.  Moreover, if $\delta(T) =\pi(T)$ then all the games $G_{T,\kappa}$ are determined. 
\end{proposition}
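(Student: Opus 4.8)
The plan is to establish the chain $\delta(T)\le \mathrm{gd}(T)\le \pi(T)$ as two independent inequalities, each via an explicit strategy, and then read off the remaining clauses. For the upper bound $\mathrm{gd}(T)\le\pi(T)$, set $\kappa=\pi(T)$ and fix a $\pi$-basis $\{U_\gamma:\gamma<\kappa\}$. I would let $\PI$ play $U_\gamma$ at stage $\gamma$ (or, to respect the beneficial rule, $U_\gamma\setminus\overline{\bigcup_{\alpha<\gamma}x_\alpha}$ whenever this is non-empty). Then each $U_\gamma$ acquires a chosen point $x_\gamma\in U_\gamma$, and since a set meeting every member of a $\pi$-basis is dense, the play reaches a dense set by stage $\kappa$; hence $G_{T,\kappa^+}$ ends before stage $\kappa^+$ and $\PI$ wins, giving $\mathrm{gd}(T)\le\kappa$. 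For the lower bound $\delta(T)\le\mathrm{gd}(T)$, write $\kappa=\mathrm{gd}(T)$ and fix a winning strategy $s$ for $\PI$ in $G_{T,\kappa^+}$. Given an arbitrary dense $A\subset T$, I would run $s$ while forcing $\PII$ to answer inside $A$, which is legal since each non-empty open move meets the dense set $A$. Because $s$ is winning, the play terminates at some $\beta<\kappa^+$, so the selected points form a dense subset of $T$ lying in $A$, of cardinality $|\beta|\le\kappa$; such a set is dense in $A$, whence $\mathrm{d}(A)\le\kappa$, and taking the supremum over dense $A$ yields $\delta(T)\le\kappa$.

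The ``in particular'' clause is then immediate. If $\pi(T)=\aleph_0$, the first strategy above already wins $G_{T,\omega_1}$, so $\mathrm{gd}(T)\le\aleph_0$, and the chain $\delta(T)\le\mathrm{gd}(T)\le\pi(T)=\aleph_0$ forces $\delta(T)=\aleph_0$, i.e. dense-separability.

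For the determinacy clause, put $\lambda=\delta(T)=\pi(T)$, so the squeeze gives $\mathrm{gd}(T)=\lambda$. I would first record the monotonicity: $\PI$ may waste innings, so her winning targets form an up-set, while $\PII$ only benefits from a smaller target, so his form a down-set. Since $\PI$ wins $G_{T,\lambda^+}$, she wins $G_{T,\kappa}$ for every cardinal $\kappa\ge\lambda^+$. For $\kappa\le\lambda$ I would hand $\PII$ the following strategy: fix a dense $D$ with $\mathrm{d}(D)\ge\kappa$ and have $\PII$ always answer inside $D$ (again legal by density). Any initial play of length $\beta<\kappa$ selects fewer than $\kappa$ points (as $\kappa$ is a cardinal), which cannot be dense in $D$ when $\mathrm{d}(D)\ge\kappa$, and so is not dense in $T$; thus the game survives to stage $\kappa$ and $\PII$ wins. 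Since every cardinal is either $\le\lambda$ or $\ge\lambda^+$, this covers all the games $G_{T,\kappa}$.

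The step I expect to be the genuine obstacle is the existence of the witnessing dense set $D$ in the boundary case $\kappa=\lambda$. For $\kappa<\lambda$ the defining supremum $\delta(T)=\lambda$ produces a dense set of density $>\kappa$ outright, and when $\lambda$ is a successor cardinal the supremum is automatically attained, giving $D$ with $\mathrm{d}(D)=\lambda$; but when $\lambda$ is a limit cardinal whose supremum is not attained one must argue separately that $G_{T,\lambda}$ is determined, either by showing that $\PI$ can still force a dense set before stage $\lambda$ in this situation, or by refining $\PII$'s strategy so as to diagonalize against dense sets of density approaching $\lambda$. I would focus the real work there, since the two bulk regimes are routine.
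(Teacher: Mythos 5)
Your proofs of the two inequalities and of the ``in particular'' clause are correct and essentially the paper's own: the upper bound via a $\pi$-basis strategy (the paper's version is adaptive, always playing the least-indexed basis element missing the closure of the points picked so far, which makes the indices strictly increase and bounds the play by $\pi(T)$; your non-adaptive enumeration works just as well), and the lower bound via the same ``make $\PII$ answer inside a dense set $A$'' device, which the paper phrases contrapositively.

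The real content of your write-up is the determinacy clause, and your suspicion about the boundary case is exactly right; in fact you are more careful here than the paper, whose proof is the single line that ``$\PII$ wins if $\kappa\le\delta(T)$ and $\PI$ wins if $\kappa>\pi(T)$.'' The argument it appeals to only gives $\PII$ a winning strategy in $G_{T,\kappa}$ when there exists a dense $A$ with $\mathrm{d}(A)\ge\kappa$, i.e.\ for $\kappa<\delta(T)$, or for $\kappa=\delta(T)$ when the supremum defining $\delta$ is attained. As you observe, attainment is automatic at successors, and non-attainment forces $\delta(T)$ to be a limit cardinal, so the delicate case is $\kappa=\lambda:=\delta(T)=\pi(T)$ a limit cardinal with non-attained supremum. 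For regular such $\lambda$ your first remedy does work: if $\PI$ plays the $\pi$-basis strategy and some play lasted exactly $\lambda$ stages, the resulting dense set $\{x_\alpha\}_{\alpha<\lambda}$ would, by non-attainment, contain a dense subset of size $<\lambda$, whose index set is bounded below $\lambda$ by regularity, contradicting that no proper initial segment was dense; hence $\PI$ wins $G_{T,\lambda}$. Your second remedy (having $\PII$ diagonalize through dense sets $A_i$ with $\mathrm{d}(A_i)\nearrow\lambda$) is problematic, because for dense subsets density decreases under enlargement (if $A\subset B$ are both dense in $T$ then $\mathrm{d}(B)\le\mathrm{d}(A)$), so the mixed set of points picked across the $A_i$ need not be non-dense at any stage. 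The remaining case, $\lambda$ singular with non-attained supremum, is resolved neither by your argument nor by the paper's; note that whether $\delta$ is attained is precisely the ``SUP=MAX problem'' of the cited paper \cite{BJ2}, so this is a genuine set-theoretic subtlety, and the gap you flagged is a gap in the paper's own proof rather than a defect peculiar to yours.
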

\begin{proof}
To check the left hand inequality, assume that there is a dense set $A \subset T$ with $\mathrm{d}(A)=\kappa$.  Then $\mathrm{P}_\mathrm{II}$ may use the following strategy:
Namely, always picking an element $x_\alpha \in U_\alpha \cap A$. The game does not terminate before the stage $\kappa$ because of the density 
assumption. Therefore $\mathrm{P}_\mathrm{I}$ does not have a strategy securing a $G_{T,\kappa}$ win for any $\kappa < \delta(T)$. 

To check the right hand inequality, observe that $\mathrm{P}_\mathrm{I}$ has a winning strategy in a game $G_{T, \pi(T)^+ }$ as follows. Let us first well-order
a $\pi$-basis of $T$ in the form $\{U_{\alpha} \}_{\alpha < \pi(T)}$. A suitable winning strategy for $\mathrm{P}_\mathrm{I}$ is choosing 
\[V_\alpha = U_{\beta_\alpha}\]
where $\beta_\alpha$ are defined by means of transfinite recursion obeying the rule
\[\beta_{\alpha} = \bigwedge \left\{\nu < \pi(T) \colon U_\nu \cap \overline{\bigcup_{\theta< \alpha} x_\theta } = \emptyset\right\}.\]
The game will terminate at latest at stage $\pi(T)$.

In the case with $\delta(T) =\pi(T)$ the game $G_{T,\kappa}$ is determined, since according to the above observations $\PII$ wins if $\kappa \leq \delta(T)$ and $\PI$ wins if $\kappa> \pi(T)$. 
\end{proof}

The following $2$ observations seem useful facts although they are easy exercises. 

\begin{proposition}
Suppose that $S \subset T$ satisfies $\overline{\mathrm{int}(\overline{S})}=\overline{S}$
(e.g. if $S$ is open or dense). Then 
\[\mathrm{gd}(S) \leq \mathrm{gd}(T) .\]
\end{proposition}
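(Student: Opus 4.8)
Let me denote $\kappa = \mathrm{gd}(T)$ and fix a winning strategy $\sigma$ for $\PI$ in $G_{T,\kappa^+}$; my goal is to manufacture from it a winning strategy for $\PI$ in $G_{S,\kappa^+}$, which is exactly the inequality $\mathrm{gd}(S)\le\kappa$. I would write $G = \mathrm{int}(\overline S)$, an open subset of $T$, so that the hypothesis reads $\overline G = \overline S$. Two consequences of this will be used repeatedly. First, $S$ is dense in $\overline S = \overline G$, hence every nonempty open subset of $T$ meeting $G$ already meets $G\cap S$. Second, $G$ is open, so the closure (in $T$) of any set contained in $T\setminus G$ stays inside $T\setminus G$. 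When $S$ is dense the set $G$ is all of $T$ and the argument below collapses to simply intersecting $\sigma$'s sets with $S$; the content of the hypothesis is precisely what lets the non-dense case be handled the same way.

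The approach is strategy stealing. I would have $\PI$ run, in the background, a single monotonically growing simulated play of $G_{T,\kappa^+}$ governed by $\sigma$, and read off her $S$-game moves from it. Every point $\PII$ plays in the $S$-game lies in $S\subset T$ and is thus a legal move in the simulation. At a given position $\PI$ queries $\sigma$ for an open $U\neq\emptyset$. If $U\cap G\neq\emptyset$ (the productive case), then $U\cap G\cap S$ is a nonempty open subset of $S$; $\PI$ plays it, $\PII$ answers with some $x\in U\cap G\cap S\subset U$, and $\PI$ appends $x$ to the simulation. If instead $U\cap G=\emptyset$, then $U\subset T\setminus G$; here $\PI$ makes no move in the $S$-game but privately invents a point $y\in U$, appends it to the simulation, and re-queries $\sigma$. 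The $S$-game thus consists exactly of the productive moves, the invented points being internal bookkeeping.

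The hard part will be showing this prescription is well defined and winning, because of the clock mismatch: naive mirroring fails precisely when $\sigma$ points outside $S$, and it is not a priori clear that the invented-point loop ever returns a productive set. Here is where the hypothesis does its work. Since $\sigma$ is winning, the simulated play terminates at some stage $\beta\le\kappa$, i.e. there the closure of all simulated points equals $T$. Splitting those points into the set $R$ of productive points and the set $I$ of invented ones and using $\overline{R\cup I}=\overline R\cup\overline I$ together with $\overline I\subset\overline{T\setminus G}=T\setminus G$, termination forces $G\subset\overline R$; taking closures and invoking $\overline G=\overline S$ gives $\overline S\subset\overline R$, hence $S\subset\overline R$, so once the simulation ends the $S$-game has ended too, after at most $\beta\le\kappa<\kappa^+$ productive moves. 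The very same inclusion rules out a runaway inner loop: were $\sigma$ to keep producing sets disjoint from $G$ up to the terminal stage without enlarging $R$, the displayed computation would already give $S\subset\overline R$ at the current stage, i.e. the $S$-game would have ended before the loop began, a contradiction. Hence every inner loop closes off before stage $\beta$, the strategy is total, and $\PI$ wins $G_{S,\kappa^+}$, giving $\mathrm{gd}(S)\le\mathrm{gd}(T)$.
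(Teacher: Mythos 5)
Your proof is correct, and in fact the paper offers no proof at all here -- the proposition is stated with a \qed as an ``easy exercise'' -- so your strategy-stealing argument (simulate the $G_{T,\kappa^+}$ play, trace productive sets onto $S$ via $G=\mathrm{int}(\overline S)$, invent points for sets missing $G$, and use $\overline I\subset T\setminus G$ plus $\overline G=\overline S$ to transfer termination) supplies exactly the argument the author left out. One minor slip: a play won by $\mathrm{P}_\mathrm{I}$ in $G_{T,\kappa^+}$ terminates at some stage $\beta<\kappa^+$, not necessarily $\beta\le\kappa$ (ordinals below $\kappa^+$ can exceed $\kappa$); this is harmless, since your argument only uses that the order type of the productive moves is at most $\beta<\kappa^+$.
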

\qed

\begin{proposition}
Let $X$ be a topological space. Suppose that 
$\{x_\alpha\}_{\alpha<\kappa} \subset X$ is a dense family of points with $t(x_\alpha, X) \leq \kappa$. Then 
$\delta (X) \leq \kappa$. 
\end{proposition}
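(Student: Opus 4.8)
The plan is to unwind the definition of $\delta(X)$ and reduce the claim to the following statement: for every dense subset $A \subset X$, the subspace $A$ admits a dense subset of cardinality at most $\kappa$; equivalently, there is a set $B \subset A$ with $|B| \leq \kappa$ and $\overline{B} = X$. Since any such $B$ is automatically dense in $A$ as well, this yields $\mathrm{d}(A) \leq \kappa$, and taking the supremum over all dense $A$ then gives $\delta(X) \leq \kappa$. Thus the whole proof amounts to exhibiting, for an arbitrary fixed dense $A$, a small dense subset inside it.

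To produce such a $B$, I would exploit the tightness hypothesis at the points of the given dense family. Fix a dense set $A \subset X$. For each $\alpha < \kappa$ we have $x_\alpha \in X = \overline{A}$, so $x_\alpha$ lies in the closure of $A$. By the assumption $t(x_\alpha, X) \leq \kappa$ there is a subset $Z_\alpha \subset A$ with $|Z_\alpha| \leq \kappa$ such that $x_\alpha \in \overline{Z_\alpha}$. In other words, each point of the dense family can be approximated from inside $A$ using at most $\kappa$ witnesses.

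I would then set $B = \bigcup_{\alpha < \kappa} Z_\alpha \subset A$. Infinite cardinal arithmetic gives $|B| \leq \kappa \cdot \kappa = \kappa$. To see that $B$ is dense, note that $x_\alpha \in \overline{Z_\alpha} \subset \overline{B}$ for every $\alpha < \kappa$, so the whole family $\{x_\alpha\}_{\alpha<\kappa}$ is contained in $\overline{B}$. Since this family is dense in $X$, we obtain $X = \overline{\{x_\alpha\}_{\alpha<\kappa}} \subset \overline{\overline{B}} = \overline{B}$ by idempotence of the closure operator, whence $\overline{B} = X$, as required.

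The argument is short, and the only points needing care are routine: that a dense subset of $A$ is automatically dense in $X$ (so that $\mathrm{d}(A)$ is controlled by $B$), the identity $\kappa \cdot \kappa = \kappa$ for infinite $\kappa$, and the idempotence of closure. The one substantive step — and the place where the hypothesis genuinely enters — is the extraction of the small witness sets $Z_\alpha$, which rests entirely on the tightness bound $t(x_\alpha, X) \leq \kappa$; no further separation or countability assumptions on $X$ are invoked. I therefore do not expect a real obstacle, only the bookkeeping of these standard facts.
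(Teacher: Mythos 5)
Your proof is correct and complete. The paper itself gives no argument for this proposition (it is flagged as an easy exercise and closed with a bare \qed), and your route --- using the tightness bound at each $x_\alpha$ to extract witness sets $Z_\alpha \subset A$ of size $\leq \kappa$, taking $B = \bigcup_{\alpha<\kappa} Z_\alpha$, and noting that $B \subset A$ dense in $X$ is dense in the subspace $A$ --- is precisely the standard argument the author intends.
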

\qed

Next it turns out that if $\PI$ can control the game well then the cardinal invariants appearing in the inequalities in \ref{thm: ineq} coincide.

\begin{theorem}\label{thm: card_inv}
Let $X$ be a topological space and $\theta$ be the order type (i.e. length) of any game played in $G_{X, |X|^+ }$ won by $\PI$.
Then $\cf(\theta) \leq \delta(X)$.

Moreover, if $\PI$ has a strategy which makes the game end exactly at the stage $\kappa$, then 
$\kappa=\mathrm{gd}(X)=\delta(X)=\mathrm{d}(X)$.
\end{theorem}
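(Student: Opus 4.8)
The plan is to treat the two assertions separately, using as a backbone the inequalities $\mathrm{d}(X) \le \delta(X) \le \mathrm{gd}(X)$, where the left inequality holds because $X$ is itself a dense subset of $X$, and the right one is Proposition \ref{thm: ineq}. Throughout, $\kappa$ is a cardinal in accordance with the standing convention, so that a transfinite sequence of $\kappa$ pairwise distinct points has cardinality exactly $\kappa$.

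For the first assertion, fix a play won by $\PI$ of length $\theta$, yielding the selected points $\{x_\alpha\}_{\alpha<\theta}$, which is dense by the winning condition. By the definition of $\delta$ we have $\mathrm{d}(\{x_\alpha\}_{\alpha<\theta}) \le \delta(X)$, so I would pick a subset $D$ that is dense in this sequence (hence dense in $X$) with $|D| = \mathrm{d}(\{x_\alpha\}_{\alpha<\theta}) \le \delta(X)$. For each point of $D$ record the least stage at which $\PII$ chose it, and let $S$ be the set of these stages, so that $|S| \le |D| \le \delta(X)$ and $D \subseteq \{x_\alpha : \alpha \in S\}$. The crux is that $S$ must be cofinal in $\theta$: for limit $\theta$, if not, then $\sup S + 1 < \theta$ and $\{x_\alpha\}_{\alpha<\sup S+1} \supseteq D$ would already be dense, contradicting that the game did not terminate before $\theta$ (the successor case gives $\cf(\theta)=1$ at once). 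Hence $\cf(\theta) \le |S| \le \delta(X)$.

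For the moreover part, assume $\PI$ has a strategy forcing every consistent play to have length exactly $\kappa$. The upper bounds come for free: the resulting $\{x_\alpha\}_{\alpha<\kappa}$ is dense of size $\le \kappa$, so $\mathrm{d}(X) \le \kappa$; and since every play ends strictly before $\kappa^+$, $\PI$ wins $G_{X,\kappa^+}$, whence $\mathrm{gd}(X) \le \kappa$. It then remains to establish the reverse bound $\kappa \le \mathrm{d}(X)$, after which the backbone inequalities collapse all four quantities to $\kappa$.

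For this lower bound I would argue by contradiction. Suppose $\mathrm{d}(X) = \mu < \kappa$ and fix a dense $D$ with $|D| = \mu$. Invoking the remark that $\PI$ may be assumed to obey the beneficial rule $U_\alpha \subseteq X \setminus \overline{\bigcup_{\gamma<\alpha} x_\gamma}$ without changing any game lengths, let $\PII$ respond against this strategy by always choosing $x_\alpha \in U_\alpha \cap D$, which is possible since $D$ is dense and each $U_\alpha$ is a nonempty open set. The beneficial rule forces each chosen point to lie outside the closure of the earlier ones, so the $x_\alpha$ are pairwise distinct; as this play too has length exactly $\kappa$, it produces $\kappa$ distinct points, all in $D$, contradicting $|D| = \mu < \kappa$. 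Thus $\mathrm{d}(X) \ge \kappa$, and the equalities follow. The hard part is precisely this last step: one must check that steering $\PII$ into $D$ is genuinely compatible with $\PI$'s prescribed strategy and that the reduction to the beneficial rule really preserves the exact length $\kappa$, so that the forced distinctness of the points can be played off against the small cardinality of $D$.
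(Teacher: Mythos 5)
Your proof of the first assertion is correct and coincides with the paper's: you pick a dense subset $D$ of the play $\{x_\alpha\}_{\alpha<\theta}$ of size $\mathrm{d}(\{x_\alpha\}_{\alpha<\theta})\le\delta(X)$ and observe that the stages at which its points first appear must be cofinal in $\theta$, since otherwise the play would have ended earlier. Likewise your upper bounds $\mathrm{d}(X)\le\kappa$ and $\mathrm{gd}(X)\le\kappa$ in the second assertion are correct and agree with the paper.

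The lower bound $\kappa\le\mathrm{d}(X)$ is where there is a genuine gap, exactly at the step you flag as ``the hard part,'' and that step cannot be repaired in the form you propose. The paper's remark on the beneficial rule asserts only that imposing it does not change which player \emph{wins}; it does not preserve the \emph{exact lengths} of plays, which is what your reduction needs. The obstruction is that a strategy forcing exact length $\kappa$ may rely essentially on ``stalling'' innings, i.e.\ nonempty open sets $U_\beta\subseteq\overline{\{x_\alpha\}_{\alpha<\beta}}$; such an inning leaves the closure of the selected points unchanged no matter what $\PII$ answers, and it has no counterpart under the beneficial rule. Concretely, let $X=\Q$ and $W=(0,1)\cap\Q$. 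Player I can spend the innings $n<\omega$ running the least-index $\pi$-base strategy of Proposition \ref{thm: ineq} inside $W$ (so that after $\omega$ innings the chosen points are dense in $W$ whatever $\PII$ did, while their closure stays inside $\overline{W}\neq\Q$), then stall by playing $W$ itself at every stage in $[\omega,\aleph_1)$, play at stage $\aleph_1$ the least member of a $\pi$-base of $\Q$ missing the current closure, stall on $(\aleph_1,\aleph_2)$, and so on through all $\aleph_n$. This strategy makes every play end exactly at stage $\aleph_\omega$, although $\mathrm{d}(\Q)=\omega$. Hence your WLOG is false: under the beneficial rule the chosen points are pairwise distinct rationals, so every such play on $\Q$ ends at a countable stage. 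The same example shows the ``moreover'' claim itself needs $\kappa$ regular (or the beneficial rule built into the game) -- a hypothesis that neither you nor, in fairness, the paper's own terse proof makes explicit.

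The way to close the gap -- and it is what the paper's phrase ``$\PII$ could play according to any dense subset'' gestures at -- is to work on $\PII$'s side and leave the given strategy untouched. Suppose $\mathrm{d}(X)=\mu<\kappa$, fix a dense $D$ with $|D|=\mu$, and let $\PII$ play greedily in $D$: at stage $\beta$ pick a not-yet-chosen point of $U_\beta\cap D$ if one exists, otherwise any point of $U_\beta\cap D$. In the second case $U_\beta\cap D$ consists of already chosen points, so $U_\beta\subseteq\overline{U_\beta\cap D}\subseteq\overline{\{x_\alpha\}_{\alpha<\beta}}$, and the closure cannot grow at stage $\beta$ regardless of the pick. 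Thus the closure grows at no more than $\mu$ stages; if $\kappa$ is regular these stages are bounded by some $\sigma<\kappa$, after which the closure is frozen at a non-dense set, so the play can never end at stage $\kappa$ -- contradicting the hypothesis. Your instinct that a small dense set is incompatible with exact length $\kappa$ is right, but the requisite ``no wasted innings'' property must be engineered through $\PII$'s play, where repetitions are provably harmless, rather than by modifying $\PI$'s strategy, where they are not.
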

\begin{proof}
To verify the first part, let $\{x_\alpha \}_{\alpha< \theta} \subset X$ be the family resulting from the choices of $\mathrm{P}_\mathrm{II}$ in a given game played. This is a dense subset in $X$.
Then it is easy to see that 
\[\cf (\theta) \leq \mathrm{d}(\{x_\alpha \}_{\alpha< \theta}) \leq \delta(X)\] 
since according to the choice of the family there are no dense subsets $\Lambda \subset \{x_\alpha \}_{\alpha< \theta}$ with $|\Lambda| < \cf(\theta)$.

To check the second part of the statement, fix $\kappa$ to be the length of the game which can be enforced by $\PI$, regardless of the strategy played by $\PII$. 
We observe that by the argument of Proposition \ref{thm: ineq} we must have $\kappa=\delta(X)=\mathrm{d}(X)$. Indeed, $\PII$ could play according to any 
dense subset, so that all the dense subsets of $X$ must have the same density.  
Clearly $\kappa \geq \mathrm{gd}(X)$ by the definition of the invariant. Since $\delta(X)\leq \mathrm{gd}(X)$ we obtain $\kappa=\mathrm{gd}(X)=\delta(X)=\mathrm{d}(X)$.
\end{proof}

It is well-known that if a topological space $X$ is metrizable then the cardinal invariants in 
\ref{thm: card_inv} coincide. The following result gives a better criterion which ensures 
the same conclusion.

\begin{theorem}\label{thm: metrizable}
Suppose that $X$ embeds in a product of $\mathrm{d}(X)$-many metrizable spaces, or, equivalently, $X$ is a completely regular space uniformizable by $\mathrm{d}(X)$-many
pseudometrics. Then
\[\mathrm{d}(X)=\delta(X) = \mathrm{gd}(X)=\pi(X)=w(X) .\] 
\end{theorem}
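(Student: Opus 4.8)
The plan is to close the chain of inequalities
\[\mathrm{d}(X) \leq \delta(X) \leq \mathrm{gd}(X) \leq \pi(X) \leq w(X) \leq \mathrm{d}(X),\]
most of which are already in hand. The first inequality is immediate, since $X$ is dense in itself and so $\mathrm{d}(X)$ is one of the densities appearing in the supremum defining $\delta(X)$. The middle two inequalities $\delta(X) \leq \mathrm{gd}(X) \leq \pi(X)$ are exactly Proposition \ref{thm: ineq}, and $\pi(X) \leq w(X)$ holds for every space because any base is in particular a $\pi$-base. Thus the only genuine content is the final inequality $w(X) \leq \mathrm{d}(X)$, and this is where the embedding hypothesis must be spent.

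To establish $w(X) \leq \mathrm{d}(X)$, write $\kappa = \mathrm{d}(X)$ and fix an embedding $X \hookrightarrow \prod_{\gamma \in \Gamma} M_\gamma$ with $|\Gamma| \leq \kappa$ and each $M_\gamma$ metrizable, identifying $X$ with its image. First I would normalize the factors: replacing $M_\gamma$ by the closure of $\pi_{M_\gamma}(X)$, which is again metrizable as a subspace of a metrizable space, we may assume that $X$ projects densely onto each coordinate. A dense set of size $\kappa$ in $X$ then projects to a dense set in $M_\gamma$, so $\mathrm{d}(M_\gamma) \leq \kappa$. The key classical input is that for a metrizable space density and weight coincide (the $\tfrac1n$-balls about a dense set form a base), whence $w(M_\gamma) \leq \kappa$ for every $\gamma$. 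Next I would bound the weight of the product: a base of $\prod_\gamma M_\gamma$ is given by the standard open boxes, each indexed by a finite set of coordinates together with a basic open set in each of them, so their number is at most $[\Gamma]^{<\omega}\cdot \kappa = \kappa$ for infinite $\kappa$. Hence $w\!\left(\prod_\gamma M_\gamma\right) \leq \kappa$, and since weight does not increase on passing to a subspace, $w(X) \leq \kappa = \mathrm{d}(X)$, as required.

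Finally I would address the stated equivalence between embedding into a product of metrizable spaces and being completely regular and uniformizable by $\mathrm{d}(X)$-many pseudometrics: each pseudometric yields, after identifying points at distance zero, a metric space together with a continuous map of $X$ into it, and the resulting diagonal map into the product of these metric spaces is an embedding precisely because the family of pseudometrics generates the uniformity, and thereby the topology, of the completely regular space; conversely the coordinate metrics of a product of metrizable spaces pull back to such a family. I expect the main obstacle to be bookkeeping rather than conceptual, namely verifying the metrizable identity $w = \mathrm{d}$ uniformly (including degenerate or finite factors, which are harmless since $\kappa \geq \aleph_0$) and checking that the normalization to dense projections does not disturb the embedding; everything else follows by concatenating the inequalities above.
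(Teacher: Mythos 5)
Your proposal is correct and follows essentially the same route as the paper: reduce everything to the single inequality $w(X)\leq \mathrm{d}(X)$, arrange that each metrizable factor has density at most $\mathrm{d}(X)$, invoke the identity $w=\mathrm{d}$ for metrizable spaces, count the standard finite-support base of the product, and pass to the subspace. The only differences are cosmetic: the paper asserts the normalization of the factors as a ``without loss of generality'' where you justify it via dense projections, and you additionally sketch the pseudometric equivalence, which the paper leaves implicit.
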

\begin{proof}
Since the above equalities hold generally as inequalities, it suffices to show that $w(X)\leq \mathrm{d}(X)$.
Let $X\subset Z:=\prod_{\alpha< \mathrm{d}(X)} X_\alpha$ be a product of metric spaces $X_\alpha$ such that $X$ embeds in this product. 
We denote the canonical projections by $\pi_\alpha \colon Z \to X_\alpha$.
We may assume without loss of generality that $\mathrm{d}(X_\alpha) \leq \mathrm{d}(X)$ for each $\alpha$.
Thus $w(X_\alpha )=\mathrm{d}(X_\alpha)\leq \mathrm{d}(X)$ for each $\alpha$. 

Let $\{U_{\alpha, \beta}\}_{\beta<\mathrm{d}(X)}$ be a base for each $X_\alpha$. Let $\mathcal{F}$ be the collection of mappings of the form 
$f \colon A \to \mathrm{d}(X) \times \bigcup_{\alpha,\beta}  \{U^{(\alpha)}_{\beta}\}_{\beta<\mathrm{d}(X)}$, 
$A\in \mathrm{d}(X)^{<\omega}$, with $f\colon y \mapsto (\alpha, U^{(\alpha)}_\beta )$ where $\pi_1 f$ (the projection of $f$ to the first coordinate) is an injection.

Then the sets of the form 
\[\prod_{\substack{\alpha =\pi_1 f a\\ a\in A}} \pi_2 f(a) \quad \times \prod_{\beta \in \mathrm{d}(X)\setminus \pi_1 fA} X_\beta,
\quad f\in \mathcal{F},\ \mathrm{dom}(f)=A\in \mathrm{d}(X)^{<\omega} \]
form a base for the product topology of $Z$. The cardinality of the base is 
\[|\mathrm{d}(X)^{<\omega}\times  (\mathrm{d}(X)\times  \mathrm{d}(X)\times  \mathrm{d}(X))^{<\omega}|=\mathrm{d}(X) .\] 
Thus $X$ inherits a base of cardinality $\leq\mathrm{d}(X)$.
\end{proof}

\subsection{Main results}

The proof of the following result is a baby version of the argument of Theorem \ref{thm: main}.

\begin{theorem}
For any topological spaces $X$ and $Y$ we have 
\[\mathrm{gd}(X\times Y) \leq \pi(X) \vee \mathrm{gd}(Y).\]
\end{theorem}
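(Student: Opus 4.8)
The plan is to set $\kappa = \pi(X) \vee \mathrm{gd}(Y)$ and exhibit a winning strategy for $\PI$ in the game $G_{X\times Y,\kappa^+}$, which by definition of the invariant gives $\mathrm{gd}(X\times Y)\leq \kappa$. Fix a $\pi$-basis $\{U_\gamma\}_{\gamma<\pi(X)}$ of $X$ and a winning strategy $\sigma$ for $\PI$ in $G_{Y,\mathrm{gd}(Y)^+}$. The starting observation is a reduction of density in the product to density along slices: a set $D\subset X\times Y$ is dense iff $D\cap (U\times V)\neq\emptyset$ for all nonempty open $U\subset X$ and $V\subset Y$, and since every nonempty open $U$ contains some $U_\gamma$, it suffices that for each $\gamma<\pi(X)$ the projection $D_\gamma := \pi_Y(D\cap(U_\gamma\times Y))$ is dense in $Y$.

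With this in mind, $\PI$ processes the indices $\gamma$ one at a time in increasing order, running a separate ``virtual'' copy of the $Y$-game during each phase. In the phase devoted to $\gamma$, $\PI$ always plays open boxes of the form $U_\gamma\times V$, where $V$ is the move dictated by $\sigma$ when fed the $Y$-coordinates of $\PII$'s previous responses in this phase. Whenever $\PII$ answers with a point $(x_\alpha,y_\alpha)\in U_\gamma\times V$, the coordinate $x_\alpha$ lands in $U_\gamma$ (so it contributes to $D_\gamma$), while $y_\alpha\in V$ is a legitimate move for $\PII$ in the virtual $Y$-game. Hence the phase is a faithful play of $G_{Y,\mathrm{gd}(Y)^+}$ conducted according to $\sigma$; since $\sigma$ is winning, this virtual game terminates at some stage $<\mathrm{gd}(Y)^+$, i.e.\ the $y$-coordinates collected during the $\gamma$-phase become dense in $Y$. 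At that moment $\PI$ detects (using perfect information and knowledge of the stage) that $D_\gamma$ is already dense and advances to the phase for $\gamma+1$.

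Once every phase has been completed, each $D_\gamma$ is dense in $Y$, so by the reduction above the full set $D$ of points chosen by $\PII$ is dense in $X\times Y$ and the game has ended. It remains to bound the length: the phase for each $\gamma$ has ordinal length $<\mathrm{gd}(Y)^+$, and the product game is their concatenation over $\gamma<\pi(X)$, so its total length is an ordinal $\theta$ with $|\theta|\leq \pi(X)\cdot \mathrm{gd}(Y) = \kappa$, whence $\theta<\kappa^+$. Thus $\PI$ wins $G_{X\times Y,\kappa^+}$ and the inequality follows.

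I would expect the main obstacle to be the careful bookkeeping of the simulation: verifying that the $y$-coordinates of $\PII$'s answers really do constitute a legal run of the auxiliary $Y$-game played according to $\sigma$ (so that the winning guarantee of $\sigma$ transfers), and organizing the concatenation of the $\pi(X)$ phases so that the ordinal arithmetic cleanly yields length $<\kappa^+$. The slice-reduction at the start is the conceptual crux that makes the ``Cartesian product of strategies'' idea go through.
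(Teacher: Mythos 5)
Your proposal is correct and follows essentially the same route as the paper: reduce density in $X\times Y$ to density of the $Y$-projections over a $\pi$-base of $X$, and simulate a winning $\PI$-strategy for $Y$ inside each slice $U_\gamma \times Y$. The only difference is bookkeeping: you run the $\pi(X)$-many phases sequentially, one slice at a time, bounding the total length by an ordinal sum of cardinality $\leq \pi(X)\cdot\mathrm{gd}(Y)=\kappa$, whereas the paper interleaves them as parallel subgames at stages $\gamma = \kappa\cdot\beta+\alpha$ and bounds the length by a supremum of $\kappa$-many ordinals below $\kappa^+$; both devices give the same conclusion.
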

\begin{proof}
Let $\kappa=\pi(X) \vee \mathrm{gd}(Y)$. Let $s_Y$ be the winning strategy of $\PI$ in $G_{Y,\kappa^+ }$.
Let $\{U_\alpha \}_{\alpha<\kappa}$ be a $\pi$-base of $X$.
The proof uses the facts that $|\kappa \times \kappa^+|=\kappa^+$ and the union of $\kappa$-many non-cofinal sequences of $\kappa^+$ 
is not cofinal in $\kappa^+$. 

The strategy of the argument is as follows. Player I may essentially run $\pi(X)$-many parallel games 
choosing open sets $U_\alpha \times V$. Here $U_\alpha \subset X$, $\alpha<\kappa$, are open sets appearing in a suitable $\pi$-base, corresponding to the $\alpha$:th parallel game, 
and $V$ is a $\mathrm{P}_\mathrm{I}$-inning in $G_{Y, \kappa^+ }$.
More precisely, the game advances as follows at $\alpha$:th parallel game. 
First $\mathrm{P}_\mathrm{I}$ may choose $W_0 = U_\alpha \times Y$. 
Then $\mathrm{P}_\mathrm{II}$ chooses a point in $z_0 \in W_0 = U_\alpha \times Y$. 
At later stages $\sigma < \kappa^+$ (whenever played) $\PI$ `projects' the game to $Y$ by applying her strategy to choose an open subset $V \subset Y\setminus \overline{\pi_Y (z_\beta \colon \beta<\sigma)}$.

Player I may aggregate the parallel games as a single game by using the ordinal arithmetic to express the stages as follows
\[\gamma = \kappa \cdot \beta + \alpha\]
where $\alpha<\kappa$ is the number of the parallel game played, $\beta<\kappa^+$ is the stage of $\alpha$:th game and $\gamma$ is the stage 
in the aggregate game involving $X \times Y$. Each of the $\kappa$-many parallel subgames terminates before stage $\kappa^+$, thus there is 
$\gamma < \kappa^+$ such that there are no innings to be played in $G_{X\times Y, \kappa^+ }$ after stage $\gamma$.

To fill in the details, a winning strategy $s$ of $\PI$ in $G_{X\times Y, \kappa^+}$ is can be defined as follows:
\begin{enumerate}
\item $\PI$-innings for $\gamma =\kappa\cdot \beta$, $\beta<\kappa^+$, (zero-stage in the subgames) are $X\times Y$. 
\item $\PI$-innings for $\gamma =\kappa \cdot \beta + \alpha$ with $0<\beta<\kappa^+$, $0<\alpha<\kappa$:
\[U_\alpha \times s_Y (\{\pi_Y (x_{\kappa \cdot \sigma + \alpha})\}_{\sigma<\beta})
\ \text{if}\ \overline{\{\pi_Y (x_{\kappa \cdot \sigma + \alpha})\}}_{\sigma<\beta}\neq Y \]
\[\text{and}\ U_\alpha \times Y\ \text{otherwise}.\]
\end{enumerate}
For technical reasons we let the game(s) run here possibly a bit longer even after the points picked by $\PII$ 
are already dense.
For each $\alpha<\kappa$ let 
\[\beta_\alpha := \bigwedge \{\beta<\kappa^+ \colon \overline{\{\pi_Y (x_{\kappa \cdot \sigma + \alpha})\}}_{\sigma<\beta} = Y\}.\]
By the definition of $s_Y$ we obtain $\beta_\alpha<\kappa^+$ for all $\alpha<\kappa$. Put 
\[\hat{\beta}:=\bigvee_{\alpha<\kappa}  \kappa \cdot \beta_\alpha + \alpha .\]
Observe that $\hat{\beta}<\kappa^+$. By the definitions of the product topology and $\pi$-basis, 
and the choices of $\beta_\alpha$ we see that $\{x_{\theta}\}_{\theta<\hat{\beta}}\subset X \times Y$
is dense. This means that $s$ is a winning strategy for $\PI$ in $G_{X\times Y, \kappa^+ }$.
\end{proof}

We will consider a kind of fan tightness condition involving products of spaces.
Let $\kappa$ be an infinite cardinal and $\{X_\alpha \}_{\alpha <\sigma}$, $\sigma\leq \kappa$, be a family of topological spaces. Suppose that for each finite subset $\Gamma \subset \sigma$, each $\alpha<\sigma$ 
and each non-empty open subset $U \subset \prod_{\gamma\in \Gamma} X_\gamma$ 
there is a family of open subsets $\{V_\beta \}_{\beta<\kappa} \subset \prod_{\gamma\in \Gamma} X_\gamma$ 
such that whenever $A\subset \prod_{\gamma\in \Gamma} X_\gamma$ is such that for all 
$\beta < \kappa$ and $\gamma \in \Gamma$ 
\begin{equation}\label{eq: char0}
\mathrm{A}_\beta := A \cap V_\beta \ \text{satisfies}\  \overline{\pi_{X_\gamma} \left(\mathrm{A}_\beta \right)} = \overline{\pi_{X_\gamma} \left(V_\beta \right)},
\end{equation}
then there is $\gamma \in \Gamma$ such that 
\begin{equation}\label{eq: char}
\pi_{X_\gamma} (U \setminus \overline{\mathrm{A}} ) \subsetneq \pi_{X_\gamma} (U). 
\end{equation}

It is easy to see that this is for instance the case if for finitely many $\alpha<\sigma$ it holds that 
each non-empty open subset $V \subset X_\alpha$ contains a non-empty open subset $W \subset V$ with $\pi(W)\leq \kappa$ and $\pi(X_\beta) \leq \kappa$ for each $\beta<\sigma$ different from the above $\alpha$:s.

The next result has some bearing on the question in the introduction regarding the preservation of dense-separable spaces in finite or countable products.

\begin{theorem}\label{thm: main}
Let $\sigma\leq \kappa$ and $\{X_\alpha \}_{\alpha <\sigma}$ be a family of spaces with 
$\mathrm{gd}(X_\alpha  ) \leq \kappa$ and suppose that the above tightness condition involving 
\eqref{eq: char0}-\eqref{eq: char} holds. Then $\mathrm{gd}(\prod_{\alpha<\sigma} X_\alpha ) \leq \kappa$.
\end{theorem}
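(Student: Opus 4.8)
The plan is to generalize the parallel-games construction of the preceding (baby) theorem, the new feature being that \emph{every} coordinate now contributes dynamically through its own winning strategy, with the tightness condition \eqref{eq: char0}--\eqref{eq: char} replacing the static role played by the $\pi$-basis of $X$ there. Write $P=\prod_{\alpha<\sigma}X_\alpha$ and fix, for each $\alpha<\sigma$, a winning strategy $s_\alpha$ for $\PI$ in $G_{X_\alpha,\kappa^+}$. Since a projection of an open set is open, the monotonicity Proposition (the one with $\overline{\mathrm{int}(\overline S)}=\overline S$) guarantees $\mathrm{gd}(O)\leq\mathrm{gd}(X_\alpha)\leq\kappa$ for every non-empty open $O\subset X_\alpha$, so $\PI$ also has winning strategies of length $\kappa^+$ on all the coordinate projections of the test sets that will arise.

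First I would reduce to finite products. Recall that $A\subset P$ is dense if and only if $\pi_\Gamma(A)$ is dense in $\prod_{\gamma\in\Gamma}X_\gamma$ for every finite $\Gamma\subset\sigma$; since $\sigma\leq\kappa$ there are at most $\kappa$ such $\Gamma$. If $\PI$ can force, for each fixed finite $\Gamma$, that $\pi_\Gamma(A)$ becomes dense in $\prod_{\gamma\in\Gamma}X_\gamma$ in fewer than $\kappa^+$ innings --- playing her $P$-moves as cylinders $\pi_\Gamma^{-1}(\cdot)$ over moves of a finite-product game, so that the $\Gamma$-projections of $\PII$'s points serve as $\PII$'s moves in that finite-product game --- then she may run these $\leq\kappa$ finite-product subgames in parallel and aggregate them by the ordinal coding $\gamma=\kappa\cdot\beta+\alpha$ exactly as before. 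Because $\kappa^+$ is regular, a union of $\leq\kappa$ many subsets of $\kappa^+$ each bounded below $\kappa^+$ is again bounded below $\kappa^+$, so the aggregate game on $P$ terminates before stage $\kappa^+$ and $\PI$ wins.

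Thus the heart of the matter is a single finite product $\prod_{\gamma\in\Gamma}X_\gamma$. Here I would argue by a transfinite recursion that drives the uncovered open region $R:=\prod_{\gamma\in\Gamma}X_\gamma\setminus\overline{A}$ down to $\emptyset$. At a stage where $R\neq\emptyset$, apply the tightness hypothesis to $U=R$ to obtain test sets $\{V_\beta\}_{\beta<\kappa}$; then $\PI$ spends a block of innings playing, for each pair $(\beta,\gamma)\in\kappa\times\Gamma$, her restricted strategy on $\pi_{X_\gamma}(V_\beta)$ inside $V_\beta$ until condition \eqref{eq: char0} is met, i.e. $\overline{\pi_{X_\gamma}(A\cap V_\beta)}=\overline{\pi_{X_\gamma}(V_\beta)}$ for all $\beta,\gamma$. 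This block consists of $\kappa$ coordinate games each of length $<\kappa^+$, hence is itself of length $<\kappa^+$. Once \eqref{eq: char0} holds, \eqref{eq: char} yields a coordinate $\gamma$ along which $\pi_{X_\gamma}(R\setminus\overline A)$ has strictly shrunk, i.e. the recursion has made genuine progress in at least one coordinate; I would then repeat with the new, smaller uncovered region.

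The step I expect to be the main obstacle is precisely the termination and bookkeeping of this inner recursion: showing that the successive strict shrinkings of the coordinate projections $\pi_{X_\gamma}(R)$ exhaust $R$ after fewer than $\kappa^+$ blocks, and that the \emph{total} number of coordinate subgames invoked over the whole recursion stays $\leq\kappa$, so that the global aggregate length remains below $\kappa^+$. The delicacy is twofold: the test sets must be chosen adaptively to $\PII$'s responses (the region $R$ changes with every point $\PII$ plays), and open sets carry no descending-chain condition, so the proof that the shrinking process halts must come from the game-length bounds and the regularity of $\kappa^+$ rather than from any order-theoretic finiteness. Making these two layers of transfinite recursion --- the inner one on $R$ and the outer aggregation over test sets, coordinates and finite $\Gamma$'s --- fit together inside a single length-$\kappa^+$ strategy is the technical core of the argument.
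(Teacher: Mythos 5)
Your outline reproduces much of the paper's architecture: the reduction of density in the full product to density of the projections onto the $\leq\kappa$ finite subproducts, the parallel aggregation of those subgames by ordinal coding and regularity of $\kappa^{+}$ (this is the first coordinate of the paper's bookkeeping map $g$), and, inside a finite product, blocks of innings in which restricted coordinate strategies force \eqref{eq: char0} on the tightness test sets $\{V_\beta\}_{\beta<\kappa}$ so that \eqref{eq: char} certifies progress in some coordinate (the third and fourth coordinates of $g$). But the step you defer as ``the main obstacle'' --- termination of the inner recursion --- is not a detail to be filled in later; it is the one idea your proposal is missing, and the paper devotes a separate layer of its construction (the second coordinate of $g$) precisely to it.

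Concretely, you apply the tightness condition to $U=R$, the current uncovered region, and conclude that each block strictly shrinks some $\pi_{X_\gamma}(R)$. As you yourself note, strictly decreasing transfinite chains of open sets need not be short, so this alone gives no bound on the number of blocks: nothing in your setup prevents the recursion from running for $\kappa^{+}$ blocks, at which point $\PI$ has lost. The game-length bounds cannot rescue this, because in your scheme the winning strategies $s_\gamma$ are consumed only at the bottom level, inside the $V_\beta$'s, to achieve \eqref{eq: char0}; the shrinkage of $R$ itself is never charged against any play of a winning strategy. The paper closes exactly this hole by applying the tightness condition not to $R$ but to $U=\prod_{\gamma\in\Gamma}W_\gamma$, where $W_\gamma$ is the open set currently prescribed by $s_\gamma$ in a \emph{virtual} play of $G_{X_\gamma,\kappa^{+}}$ whose record of picked points is (roughly) the closed set $\overline{X_\gamma\setminus\pi_{X_\gamma}(R)}$. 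With that choice, the point witnessing \eqref{eq: char} lies in $W_\gamma$ and can be handed back to $s_\gamma$ as a legal $\PII$-inning of that virtual play; this is where the paper relabels innings to discard stalled coordinates and invokes the axiom of choice to select witness points. Termination then follows by pigeonhole: each block advances at least one of the finitely many virtual coordinate plays by a proper inning, so $\kappa^{+}$ blocks would force $\kappa^{+}$ proper innings in a single play conducted according to some winning $s_{\gamma_0}$, with density never reached along the way (if the virtual picked set ever became dense in $X_{\gamma_0}$, then $\pi_{X_{\gamma_0}}(R)$, an open set, would have empty interior, hence $R=\emptyset$ and the phase would already be over) --- contradicting that $s_{\gamma_0}$ is winning in $G_{X_{\gamma_0},\kappa^{+}}$. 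Without this coupling of the test sets to the strategies $s_\gamma$, the recursion you describe has no reason to halt before stage $\kappa^{+}$.
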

\begin{proof}[Sketch of Proof] 
Without loss of generality we may concentrate on the case $\sigma = \kappa$.
The innings of $\PI$ resulting in the termination of the game before the stage $\kappa^+$ are described in terms of a decomposition by means of suitable subgames.
    
Let $\{\Gamma_\beta \}_{\alpha<\kappa}$ be the indexing of $\kappa^{<\omega}\setminus \{\emptyset\}$. 
Let $s_\alpha$, $0\leq \alpha<\kappa$, be some $\PI$ $G_{X_\alpha , \kappa^+ }$ winning strategies.  
During the course of the game we will construct by transfinite recursion an auxiliary partial mapping 
$g \colon \kappa^+ \to \kappa\times \kappa^+ \times \kappa\times \kappa^+$ which will be increasing 
when we consider $\kappa\times \kappa^+ \times \kappa \times \kappa^+$ in the lexicographic order. We denote by $\pi_i$ the projection to $i$:th coordinate, $i=1,2,3,4$. 

Let the first inning of $\PI$ be $U_0 = \prod_{\alpha<\kappa} X_\alpha $. Let $x_0 \in \prod_{\alpha<\kappa} X_\alpha $ be the first inning of $\PII$.
At each stage $\nu < \kappa^+$ of the game we denote by $A_\nu$ the set of points selected by $\PII$ before stage $\nu$.

Let $g(0)=(0,0,0,0)$. 
If $g(\delta)=(\alpha,\beta,\eta, \epsilon)$ where $\epsilon<\kappa^+$, then $g(\delta + 1)= (\alpha,\beta,\gamma, \epsilon + 1)$. There are gaps on the ordering, depending on the course of the game. 
Suppose that $\pi_4 g(\delta)\nearrow \lambda$, a limit ordinal, as $\delta \nearrow \mu$,
a limit ordinal, where the first $3$ coordinates are constant on an interval with $\delta \in (\nu,\mu)$, $\nu<\mu$. Then $g(\mu) = (\alpha_0, \beta_0 , \eta_0 ,\lambda)$ (typically), or 
$g(\mu)=(\alpha_0, \beta_0 , \eta_0 +1, 0)$, depending on the course of the game. 
Similarly, the other coordinate functions $\pi_i g$, $i=1,2,3$, behave in a semi-continuous manner. 

\noindent{\bf On the interpretation of $g$.} The variable of $g$ express different stages of the game. It is useful to think of the game as 
an aggregate of suitable subgames or phases. The triple $(\alpha,\beta,\eta)$ can be thought of as a phase of the game and $\epsilon$ as a stage in this phase. Similarly, $\eta$ can be thought of as a subphase of 
$\beta$, which, in turn, is a subphase of $\alpha$. The stages $\delta$ belonging to a phase $(\alpha_0 ,\beta_0 ,\eta_0 )$ are given by
\[\{\delta < \kappa^+ \colon \exists\ \epsilon<\kappa^+ \ (\ g(\delta)=(\alpha_0 ,\beta_0 ,\eta_0 ,\epsilon)\ ) \}.\]

\noindent{\bf 1st coordinate:} $\alpha$ corresponds to  the finite index sets $\Gamma_\alpha$. The player $\PI$ drives $\PII$ choosing a subset of the product space 
which is eventually dense. According to the definition of product topology it suffices in order for $A$ to be dense in the product space that 
$\pi_{\Gamma_\alpha} A$ is dense in the corresponding finite product for all $\Gamma_\alpha$. There are $\kappa$ many of these phases, thus 
it suffices for $\PI$ in each phase $\alpha$ to yield a set $A^{(\alpha)} \subset 
\prod_{\theta<\kappa} X_\theta$, of $\PII$ innings, such that $|A^{(\alpha)}|<\kappa^+$ and $\pi_{\Gamma_\alpha}  A^{(\alpha)} \subset \prod_{\gamma\in \Gamma_\alpha} X_\gamma$ is dense.

\noindent{\bf 2nd coordinate:} For each $\Gamma_\alpha$, let $s_\gamma$ be the winning strategies concerning the respective spaces $X_\gamma$,  $\gamma \in \Gamma_\alpha$.
These strategies use the information about the stage and closure of the points selected and this must be formally handled. Let 
$\{A^{(\theta)}\}_\theta$ be an increasing family of closed sets of $\prod_{\gamma\in \Gamma_\alpha}X_\gamma$ generated by lower level 
innings of $\PII$. The input of strategy $s_\gamma$ is, roughly speaking, 
\[\overline{X_\gamma \setminus \pi_\gamma \left(\left(\prod_{\gamma\in \Gamma_\alpha}X_\gamma \right) \setminus A^{(\theta)}\right)}\]
(recall the discussion after introduction). We will effectively execute these strategies in lower levels.

There are a few technical issues at this stage that must be addressed. First is that since we are picking points in the product, instead of $X_\gamma$
itself, it could happen that the game projected in the spaces $X_\gamma$ stalls in the sense that even after innings played in the product 
the above closure remains the same. This problem will be dealt with at the next level below. It follows that some of the projected games may stall but at least one will proceed at each step. Consequently, if the $\beta$-phase subgame is played $\kappa^+$ innings, then by the pigeonhole principle there are $\kappa^+$ proper innings played in $X_{\gamma_0}$ for some $\gamma_0 \in \Gamma_\alpha$, since $\Gamma_\alpha$ is finite. To be more accurate, one must relabel the innings in the game $G_{X_{\gamma_0},\kappa^+ }$ in order to dispense with the void innings $\beta$ involving $X_\gamma$ and use the axiom of choice to pick a single point from each set of the form 
\[\overline{X_\gamma \setminus \pi_\gamma \left(\prod_{\gamma\in \Gamma_\alpha}X_\gamma \setminus A^{(\theta+1)}\right)} 
\ \setminus\  \overline{X_\gamma \setminus \pi_\gamma \left(\prod_{\gamma\in \Gamma_\alpha}X_\gamma \setminus A^{(\theta)}\right)}\]
in order to match the formal specifications of the strategy $s_{\gamma_0}$. But since $s_{\gamma_0}$ is a $\PI$ winning strategy for that game, the game ends
after $<\kappa^+$ innings (and this way we see that the relabeling does not essentially change the length of the subgame).
Thus $<\kappa^+$ innings are played at each phase $(\alpha,\beta)$.

\noindent{\bf 3rd coordinate:} $\eta<\kappa$ labels a family $V_\eta$ involving the tightness assumption 
(as in \eqref{eq: char0}-\eqref{eq: char}) where $U$ is replaced by the Cartesian product of the 
open sets given by the strategies in the previous level, 
$\prod_{\gamma \in \Gamma_\alpha}s_\gamma$. The sole purpose of this level is to keep the game moving
when projected in the coordinate spaces.

\noindent{\bf 4th coordinate:} $\epsilon$ denotes the order type of the inning played corresponding to $V_\eta$. We obtain by essentially playing $|\Gamma_\alpha |$-many subgames in a row, 
subgames projected to coordinates $\pi_{X_\gamma} V_\eta$, $\gamma \in \Gamma_\alpha$, that there is a game on the product space whose points $A=A^{(\alpha,\beta,\eta )}$ satisfy \eqref{eq: char0} for 
$\gamma \in \Gamma_\alpha$. Moreover,  by using the fact that $\mathrm{gd}(X_\gamma )\leq \kappa$, $\PI$ can choose a strategy such that the 
length of the subgame is $< \kappa^+$, regardless of the innings of $\PII$. Thus, 
$\epsilon< \mu_{\alpha,\beta,\eta}$ where the exact length of the phase $(\alpha,\beta,\eta)$, $\mu_{\alpha,\beta,\eta}$, depends on the course of the game but satisfies $\mu_{\alpha,\beta,\eta}<\kappa^+$.
\end{proof}

Under weak separation axioms it can happen that dense-separability fails but it does so only for dense subsets with high cardinality.

\begin{theorem}
Given any uncountable cardinal $\kappa$ there is a $T_1$ space $X$ such that $|X|=\mathrm{gd}(X) = \kappa$ and any dense subspace $A\subset X$ with $|A|<\kappa$ is separable.
Moreover, if $\mu$ is a measurable cardinal and there is a $\mu$-complete free ultrafilter on $\kappa$ then 
we additionally have the following property: 
Any intersection of $<\!\!\mu$-many dense non-separable subspaces of $X$ is dense and has density $\geq\mu$.
\end{theorem}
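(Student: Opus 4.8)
The plan is to realise $X$ as a set of cardinality $\kappa$ carrying an \emph{ultrafilter topology}. Fix a free ultrafilter $\mathcal{U}$ on $\kappa$ and declare the open sets to be $\{\emptyset\}\cup\mathcal{U}$. This is a topology: it is closed under finite intersections because $\mathcal{U}$ is a filter and under arbitrary unions because $\mathcal{U}$ is upward closed. Since $\mathcal{U}$ is free, no singleton lies in $\mathcal{U}$, so every singleton is closed and $X$ is $T_1$ (but not $T_2$, which fits the ``weak separation axioms'' of the statement). The first thing I would record is the description of dense sets: a closed set is either $\kappa$ itself or a non-member of $\mathcal{U}$, so for $A\subseteq\kappa$ one has $\overline{A}=\kappa$ exactly when $A\in\mathcal{U}$. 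Thus the dense subsets of $X$ are precisely the members of $\mathcal{U}$, and any member $A$ inherits again an ultrafilter topology, namely the one coming from the trace $\mathcal{U}\cap\mathcal{P}(A)$; hence $\mathrm{d}(A)$ equals the least cardinality of a member of $\mathcal{U}$ contained in $A$.

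For the first assertion I would take $\mathcal{U}$ to be \emph{uniform}, so that every member has cardinality $\kappa$. Then $|X|=\kappa$ and every dense subset has density $\kappa$; in particular there is \emph{no} dense subset of size $<\kappa$, so the clause ``every dense $A$ with $|A|<\kappa$ is separable'' holds vacuously --- precisely the phenomenon that dense-separability fails only at the top cardinality. To compute $\mathrm{gd}(X)$ I would run the game: at any stage $\beta<\kappa$ the set $\{x_\alpha\}_{\alpha<\beta}$ selected by $\PII$ has cardinality $<\kappa$, hence is not a member of $\mathcal{U}$ and not dense, so $\PI$ cannot win $G_{X,\kappa}$ and $\mathrm{gd}(X)\ge\kappa$; with the general bound $\mathrm{gd}(X)\le|X|=\kappa$ this gives $\mathrm{gd}(X)=\kappa$ (equivalently $\delta(X)=\kappa$ forces the same through Proposition \ref{thm: ineq}).

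For the ``moreover'' part I would instead use the hypothesised $\mu$-complete free ultrafilter $\mathcal{U}$. A short counting argument shows that every member has cardinality $\ge\mu$: a member of size $<\mu$ would be a union of $<\mu$ singletons, so its complement would be an intersection of $<\mu$ members, hence itself a member, a contradiction. Consequently every dense subspace has density $\ge\mu>\aleph_0$ and is therefore non-separable, so the dense non-separable subspaces of $X$ are again exactly the members of $\mathcal{U}$. Given fewer than $\mu$ such subspaces $\{A_i\}_{i<\nu}$, $\mu$-completeness yields $\bigcap_{i<\nu}A_i\in\mathcal{U}$, so the intersection is dense, and its density --- the least size of a member of $\mathcal{U}$ lying below it --- is at least $\mu$. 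This is the entire content of the second statement.

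The main obstacle is to serve both statements with one space, i.e.\ to reconcile the uniformity that pins down $\mathrm{gd}(X)=\kappa$ with the completeness used in the ``moreover'' part. In the cleanest case $\kappa=\mu$ there is nothing to reconcile: a normal measure on the measurable cardinal $\kappa$ is at once uniform and $\kappa$-complete, and the game computation goes through verbatim, with density $\ge\mu$ reading as density $=\kappa$. For $\mu<\kappa$ the given $\mu$-complete ultrafilter need not be uniform; if $\lambda_0$ is its least member size then one checks by the $M$-filling strategy (forcing $\PII$ to exhaust a member of size $\lambda_0$) that in fact $\mathrm{gd}(X)=\lambda_0$, so uniformity is genuinely needed for $\mathrm{gd}(X)=\kappa$. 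The honest reading is therefore that $\mathrm{gd}(X)=\kappa$ comes from the uniform ultrafilter of the first part, while the surviving uniform conclusion of the second part is exactly the recorded ``density $\ge\mu$''. As a consistency check on the interaction of the two parts I would also note that when $\mathcal{U}$ is $\mu$-complete the game can terminate only at stages $\lambda$ with $\cf(\lambda)\ge\mu$, since along an increasing union indexed by a set of size $<\mu$ the limit stays a non-member; by Theorem \ref{thm: card_inv} this is compatible with $\mu\le\delta(X)$.
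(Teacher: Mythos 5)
Your construction is genuinely different from the paper's, and the difference matters. The paper takes $X$ to be the disjoint union of $\kappa$ and $\omega$, with basic open sets $V\cup U$ where $V$ ranges over an ultrafilter $\mathcal{V}$ on $\kappa$ extending the co-$<\!\kappa$ filter $\mathcal{F}$ and $U$ over a free ultrafilter $\mathcal{U}$ on $\omega$. That space is \emph{separable} ($\omega$, and indeed every member of $\mathcal{U}$, is countable and dense), every dense set of size $<\kappa$ must meet $\omega$ in a member of $\mathcal{U}$ and is therefore separable, while $\kappa$ itself is a dense subspace of density $\kappa$; so the clause ``every dense $A$ with $|A|<\kappa$ is separable'' has real content there. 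In your space (the ultrafilter topology of a uniform $\mathcal{U}$ on $\kappa$) the dense sets are exactly the members of $\mathcal{U}$, all of cardinality and density $\kappa$: there are \emph{no} dense subsets of size $<\kappa$ at all, so the clause holds vacuously --- exactly as it would for the discrete space of size $\kappa$, which also satisfies the literal first assertion. Your verifications (dense iff member, $\mathrm{d}(A)=$ least size of a member below $A$, $\mathrm{gd}(X)=\kappa$ from uniformity plus $\mathrm{gd}(X)\le|X|$) are correct, so you do prove the first sentence as written, but you lose what the theorem is evidently meant to exhibit and what the paper's two-block construction delivers: a separable space in which dense-separability fails, but only at dense sets of top cardinality.

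On the ``moreover'' part you have identified a genuine obstacle, but your resolution does not prove the stated theorem: the statement requires a \emph{single} space satisfying both the first-part properties and the intersection property, and by your own analysis your space does this only when there is a \emph{uniform} $\mu$-complete ultrafilter on $\kappa$ (automatic for $\kappa=\mu$ via a normal measure); for a non-uniform $\mu$-complete $\mathcal{U}$ you correctly compute $\mathrm{gd}(X)=\lambda_0<\kappa$, and retreating to ``two readings with two different spaces'' is a gap relative to the statement. It is worth noting, however, that the paper's own proof has exactly the same unacknowledged problem: there $\mathcal{V}$ must simultaneously extend $\mathcal{F}$ (this is what gives $\mathrm{gd}(X)=\kappa$ and the separability of small dense sets) and be $\mu$-complete, i.e.\ the paper also needs a uniform $\mu$-complete ultrafilter on $\kappa$, which the stated hypothesis does not supply. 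For instance, if $\mu$ is measurable and $\kappa>\mu$ has $\cf(\kappa)=\omega$, then a $\mu$-complete free ultrafilter on $\kappa$ exists (lift a normal measure on $\mu$ along the inclusion $\mu\subset\kappa$), but no uniform countably complete ultrafilter on $\kappa$ can exist, since a partition of $\kappa$ into countably many pieces of size $<\kappa$ defeats either uniformity or countable completeness. So the tension you flagged is real and is inherited from the paper; under the strengthened hypothesis ``there is a $\mu$-complete ultrafilter on $\kappa$ all of whose members have cardinality $\kappa$,'' both your single-ultrafilter argument and the paper's construction go through for the combined statement, with the paper's version still preferable because its first part is non-vacuous.
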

\begin{proof}
Fix an uncountable cardinal $\kappa$. As a set, we let $X$ be the disjoint union of $\kappa$ and $\omega$. 
Also, in what follows we will consider these sets disjoint. 

Let $\mathcal{F}$ be the filter on $\kappa$ consisting of sets whose complement has cardinality $<\kappa$.
Extend $\mathcal{F}$ to an ultrafilter $\mathcal{V}$ and let $\mathcal{U}$ be a free ultrafilter on $\omega$.
Define the base $\mathcal{B}$ for the topology by taking all sets of the form $V\cup U$, $V\in \mathcal{V},\ U\in \mathcal{U}$. The required topology is then generated by this base.

Now, suppose that $A \subset X$ is a dense set with $|A|<\kappa$. This means that $A\cap \kappa \notin \mathcal{V}$. Therefore 
$A\cap \omega \in \mathcal{U}$. Therefore $A\cap \omega$ intersects any $N \in \mathcal{U}$, and, consequently $A\cap \omega$  
is dense in $X$. Thus $A$ is separable.

Note that $\PII$ has a winning strategy in a game of length $\kappa$ by picking points in $\kappa$. Indeed, for any subset $I \subset \kappa$, 
$|I|<\kappa$, there is an open set of $X$ not intersecting $I$ by the choice of $\mathcal{V}$, so that $I$ is not dense. 

In the latter part of the statement we choose $\mathcal{V}$ to be $\mu$-complete. If $A_\gamma \subset X$, $\gamma<\lambda<\mu$, is a family of  
(a) non-separable and (b) dense sets, then it follows that (a) $A_\gamma \cap \omega \notin \mathcal{U}$ and (a-b) $A_\gamma \cap \kappa \in \mathcal{V}$ hold for all $\gamma$. 
Thus $(\bigcap_{\gamma<\lambda} A_\gamma) \cap \kappa=\bigcap_{\gamma<\lambda} (A_\gamma \cap \kappa) \in \mathcal{V}$, by the completeness of the filter, 
so that this intersection is dense in $X$.
\end{proof}

\subsection{Application to Banach spaces}
There is a kind of Banach space (or linear) version of the dense-separable property and this is discussed below. Next we will give some consequences of Theorem \ref{thm: metrizable} involving Banach spaces.

\begin{theorem}
Let $X$ be a Banach space, $\mathrm{d}(X)=\kappa$, and let $E \subset L(X, \ell^\infty (\mu))$, $\kappa \le \mu$,  be a closed subspace which separates 
the points of $X$. Let $\tau=\sigma(X, E)$ be the locally convex topology on $X$ induced by $E$. 
Suppose that $\mathrm{d}(E)\leq \mathrm{d}(X,\tau )$. 
Then $(X,\tau )$ satisfies
\[\mathrm{d}(\B_{X},\tau )=w(\B_{X},\tau ).\] 
\end{theorem}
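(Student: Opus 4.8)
The plan is to apply Theorem \ref{thm: metrizable} to the space $(\B_X,\tau)$: once $(\B_X,\tau)$ is exhibited as a subspace of a product of $\mathrm{d}(\B_X,\tau)$-many metrizable spaces, the conclusion $\mathrm{d}(\B_X,\tau)=w(\B_X,\tau)$ (indeed the collapse of all five invariants) is immediate. Write $\lambda:=\mathrm{d}(\B_X,\tau)$. First I would record the bookkeeping inequality $\mathrm{d}(E)\le\lambda$. Since $\tau$ is a linear topology, scaling is a $\tau$-homeomorphism, so $X=\bigcup_{n}n\B_X$ is covered by countably many $\tau$-homeomorphic copies of $\B_X$; hence $\mathrm{d}(X,\tau)\le\aleph_0\cdot\lambda=\lambda$, and combined with the hypothesis $\mathrm{d}(E)\le\mathrm{d}(X,\tau)$ this gives $\mathrm{d}(E)\le\lambda$.

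Next I would build the embedding. Recall that $\tau=\sigma(X,E)$ is the locally convex topology generated by the seminorms $p_T(x)=\|Tx\|_\infty$, $T\in E$. Fix a $\|\cdot\|$-dense family $\{T_\gamma\}_{\gamma<\mathrm{d}(E)}\subset E$ and set
\[\Phi\colon(\B_X,\tau)\longrightarrow\prod_{\gamma<\mathrm{d}(E)}(\ell^\infty(\mu),\|\cdot\|_\infty),\qquad \Phi(x)=(T_\gamma x)_{\gamma}.\]
Each factor is a normed space, hence metrizable. The map $\Phi$ is injective: if $T_\gamma(x-y)=0$ for every $\gamma$ then, since $T\mapsto T(x-y)$ is norm-continuous and $\{T_\gamma\}$ is dense, $T(x-y)=0$ for all $T\in E$, and as $E$ separates the points of $X$ we conclude $x=y$. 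Moreover $\Phi$ is $\tau$-to-product continuous, since each coordinate $x\mapsto T_\gamma x$ is continuous into $(\ell^\infty(\mu),\|\cdot\|_\infty)$, the seminorm $p_{T_\gamma}$ being one of the defining seminorms of $\tau$.

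The key point -- and the only place where restriction to the bounded set $\B_X$ is essential -- is that $\Phi$ is a homeomorphism onto its image, i.e. the norm-dense family $\{T_\gamma\}$ already recovers all of $\tau$ on $\B_X$. This rests on the uniform estimate
\[|p_S(x)-p_{T_\gamma}(x)|\le\|Sx-T_\gamma x\|_\infty\le\|S-T_\gamma\|\qquad(x\in\B_X,\ S\in E),\]
valid precisely because $\|x\|\le 1$. Consequently, given a basic $\tau$-neighbourhood $\{x\in\B_X: p_{S_i}(x-x_0)<\delta,\ i\le n\}$ with $S_1,\dots,S_n\in E$, choosing $T_{\gamma_i}$ with $\|S_i-T_{\gamma_i}\|$ small makes the set $\{x\in\B_X: p_{T_{\gamma_i}}(x-x_0)<\delta/2,\ i\le n\}$ -- the trace on $\B_X$ of a $\Phi$-preimage of a product-open set -- sit inside it; so the $\Phi$-induced topology equals $\tau$ on $\B_X$, and $\Phi$ is the desired embedding. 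Padding the index set with singleton metric factors up to cardinality exactly $\lambda$ (recall $\mathrm{d}(E)\le\lambda$), Theorem \ref{thm: metrizable} applies to $(\B_X,\tau)$ and yields $\mathrm{d}(\B_X,\tau)=w(\B_X,\tau)$. I expect this bounded-set estimate to be the crux: without it a merely norm-dense subset of $E$ need not generate the full topology, while the point-separating hypothesis is exactly what makes $\Phi$ injective and the embedding genuine.
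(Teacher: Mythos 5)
Your proposal is correct and follows essentially the same route as the paper's proof: both construct the evaluation map into a product of copies of $\ell^\infty(\mu)$ indexed by a norm-dense subset of $E$ (injective by density plus point-separation), use the boundedness of $\B_X$ to see that the relative product topology on the image coincides with $\sigma(X,E)$, and then invoke Theorem \ref{thm: metrizable}. Your write-up simply makes explicit what the paper leaves as ``easy to see'' --- the uniform estimate on $\B_X$, the inequality $\mathrm{d}(E)\le \mathrm{d}(\B_X,\tau)$ via the covering $X=\bigcup_n n\B_X$, and the padding of the index set to the required cardinality.
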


Before the proof we first we note that by a standard argument $X$ can be linearly isometrically embedded in $\ell^\infty (\kappa)$. Thus we may regard $L(X) \subset L(X, \ell^\infty (\kappa)) \subset  L(X, \ell^\infty (\mu)) $ isometrically as linear subspaces in a natural way. Thus $E$ can be considered for instance any closed subspace of $L(X)$.

\begin{proof}
Let $A \subset E$ be a dense subset with $|A|=\mathrm{d}(X,\tau )$. Consider the linear mapping 
\[\mathbf{T}\colon X \to \prod_A \ell^\infty (\mu),\quad x\mapsto \{T(x)\}_{T\in A}.\]
This is injective since $E$ and thus $A$ separate the points of $X$.
It is easy to see that since $A$ is dense in $E$ the image $\mathbf{T} (\B_X ) \subset \prod_A \ell^\infty (\mu)$ 
considered in the relative topology of the product topology is in fact topologized by $\sigma(X, E)$. Finally, Theorem \ref{thm: metrizable} applies.
\end{proof}

In the same vein we observe the following known fact by thinking of the proof of the Banach-Alaoglu Theorem.

\begin{proposition}
Let $X$ be a Banach space. Then $w(\B_{X^*} ,\mathrm{w}^* )\leq  \mathrm{d}(X,\|\cdot\|)$. \qed
\end{proposition}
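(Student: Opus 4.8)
The plan is to realize $(\B_{X^*},\mathrm{w}^*)$ as a subspace of a product of $\mathrm{d}(X)$-many compact metric spaces, exactly as in the proof of the Banach--Alaoglu Theorem, and then read off the weight bound from the weight of that product. First I would fix a norm-dense set $D=\{d_\gamma\}_{\gamma<\kappa}\subset X$ with $|D|=\mathrm{d}(X)=\kappa$ (an infinite cardinal). For each $\gamma$ let $\K_\gamma$ denote the closed scalar ball of radius $\|d_\gamma\|$, a compact metric and hence second-countable space, and consider the evaluation map
\[
\Phi\colon \B_{X^*}\to \prod_{\gamma<\kappa}\K_\gamma,\qquad \Phi(x^*)=\big(x^*(d_\gamma)\big)_{\gamma<\kappa}.
\]
Each coordinate $x^*\mapsto x^*(d_\gamma)$ is $\mathrm{w}^*$-continuous by the very definition of the weak-star topology, so $\Phi$ is continuous; and $\Phi$ is injective because a bounded functional is determined by its values on a dense set.

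The key point is that $\Phi$ is in fact a homeomorphism onto its image. Here I would invoke Banach--Alaoglu: $(\B_{X^*},\mathrm{w}^*)$ is compact, the target product is Hausdorff, and a continuous injection from a compact space into a Hausdorff space is automatically an embedding. Equivalently, one checks directly that the product topology restricted to $\Phi(\B_{X^*})$ agrees with $\mathrm{w}^*$: every $\sigma(X^*,X)$-basic neighbourhood determined by finitely many $y_i\in X$ can be refined by a $\sigma(X^*,D)$-neighbourhood, choosing $d_{\gamma_i}\in D$ with $\|y_i-d_{\gamma_i}\|$ small and using $\|x^*\|\le 1$ on $\B_{X^*}$. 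Either way, $\B_{X^*}$ with $\mathrm{w}^*$ embeds topologically into $\prod_{\gamma<\kappa}\K_\gamma$.

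It then remains to bound the weight of the product. Since each $\K_\gamma$ is second countable, a base for $\prod_{\gamma<\kappa}\K_\gamma$ is obtained from the boxes restricting finitely many coordinates to members of a fixed countable base; counting gives $|\kappa^{<\omega}|\cdot\aleph_0=\kappa$ such boxes, so $w\!\left(\prod_{\gamma<\kappa}\K_\gamma\right)\le\kappa$. As weight is monotone under passage to subspaces, $w(\B_{X^*},\mathrm{w}^*)\le\kappa=\mathrm{d}(X)$. I would note that this is precisely the mechanism behind Theorem \ref{thm: metrizable}, the present product being one of $\mathrm{d}(X)$-many metrizable factors; the only genuine subtlety is the upgrade from continuous injection to embedding, which is exactly why the compactness of $\B_{X^*}$ furnished by Banach--Alaoglu is doing the real work.
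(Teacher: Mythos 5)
Your proof is correct and is precisely the argument the paper intends: it states the proposition as a known fact seen ``by thinking of the proof of the Banach--Alaoglu Theorem,'' i.e.\ the evaluation embedding of $(\B_{X^*},\mathrm{w}^*)$ into a product of $\mathrm{d}(X)$-many compact scalar balls, followed by the base-counting mechanism of Theorem~\ref{thm: metrizable}. Both your routes to the embedding property (compact-to-Hausdorff injection, or the direct $\varepsilon/3$ refinement using $\|x^*\|\le 1$) are sound, so nothing is missing.
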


Recall Corson's property (C) of Banach spaces which is a convex version of weak-star countable tightness of the dual: A Banach space $\X$ has property (C) if any collection of closed convex subsets of $\X$ with empty intersection has as a countable subcollection with empty intersection, or, equivalently, if for any $A \subset \X^*$ and $a \in \overline{A}^{\mathrm{w}^*}$ there is a countable subset $A_0 \subset A$ with $a \in \overline{\conv}^{\mathrm{w}^*} (A_0 )$, see \cite{C, Pol}.

The Countable Separation Property (CSP) (see \cite{talponen}) of a Banach space $\X$ is defined as follows: If $\mathcal{F} \subset \X^*$ is any family of functionals which separates $\X$ then there is already a countable separating subfamily $\mathcal{F}_0 \subset \mathcal{F}$. An alternative way to express the same thing is that 
if $\mathcal{F} \subset \X^*$ satisfies $\overline{\span}^{\mathrm{w}*}(\mathcal{F}) = \X^*$, then there is a countable subset $\mathcal{F}_0 \subset \mathcal{F}$ such that $\overline{\span}^{\mathrm{w}*}(\mathcal{F}) = \X^*$. If $K$ is a compact regular space such that the space $C(K)$ has (CSP), then $K$ is dense-separable 
(\cite{talponen}). Moreover, if $C(K)$ has property (C) then the separability of $K$ implies the dense-separability (\cite{Pol}).

\begin{proposition}
Let $\X$ be a Banach space.
If the dual space $\X^*$ considered in the weak-star topology is dense-separable, then $\X$ has the Countable Separation Property.
 
Suppose that $\X_n$, $n<\omega$, are Banach spaces satisfying Corson's property (C) and have a weak-star separable dual. Then the infinite direct sum $\bigoplus_n \X_n$  
taken in the $c_0$-sense has the CSP.
\end{proposition}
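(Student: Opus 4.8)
The plan is to treat the two assertions separately, reducing both to the mechanism already visible in the statement of the CSP: one turns a weak-star dense span into a \emph{countable} weak-star dense span by collecting the finitely many generators used in each approximation. For the first assertion I would argue as follows. Assume $(\X^*,\mathrm{w}^*)$ is dense-separable and let $\mathcal{F}\subset\X^*$ satisfy $\overline{\span}^{\mathrm{w}^*}(\mathcal{F})=\X^*$. Then $\span(\mathcal{F})$ is a weak-star dense subset of $\X^*$, hence weak-star separable by hypothesis, so I fix a countable $D\subset\span(\mathcal{F})$ with $\overline{D}^{\mathrm{w}^*}=\X^*$. Each $d\in D$ is a finite linear combination of members of $\mathcal{F}$; letting $\mathcal{F}_0\subset\mathcal{F}$ be the countable set of all generators occurring in the elements of $D$, we get $D\subset\span(\mathcal{F}_0)$ and therefore $\overline{\span}^{\mathrm{w}^*}(\mathcal{F}_0)\supset\overline{D}^{\mathrm{w}^*}=\X^*$, which is the required countable separating subfamily.

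For the second assertion I would first isolate the following engine: \emph{if $Y$ has property (C) and $Y^{*}$ is weak-star separable, then $Y$ has the CSP.} To see this, fix a countable weak-star dense $\{g_k\}\subset Y^*$ and let $\mathcal{F}$ satisfy $\overline{\span}^{\mathrm{w}^*}(\mathcal{F})=Y^*$. For each $k$ we have $g_k\in\overline{\span(\mathcal{F})}^{\mathrm{w}^*}$, so property (C), in the equivalent form recalled before the statement, produces a countable $B_k\subset\span(\mathcal{F})$ with $g_k\in\overline{\conv}^{\mathrm{w}^*}(B_k)\subset\overline{\span}^{\mathrm{w}^*}(B_k)$; collecting generators yields a countable $\mathcal{F}_k\subset\mathcal{F}$ with $g_k\in\overline{\span}^{\mathrm{w}^*}(\mathcal{F}_k)$. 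Then $\mathcal{F}_0:=\bigcup_k\mathcal{F}_k$ is countable and $\overline{\span}^{\mathrm{w}^*}(\mathcal{F}_0)\supset\overline{\{g_k\}}^{\mathrm{w}^*}=Y^*$. Here property (C) supplies the convex hull, passage to a span absorbs it, and weak-star separability supplies the countable target set to aim at; note that mere separability of $Y^*$ without property (C) would not suffice.

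It then remains to verify the two hypotheses for $\X=\bigoplus_{n}\X_n$ (the $c_0$-sum), whose dual is the $\ell^1$-sum $\X^*=\bigoplus_{n}\X_n^{*}$. Weak-star separability of $\X^*$ is routine: picking a countable weak-star dense $D_n\subset\X_n^*$ for each $n$ and using that the truncations of any $f=(f_n)\in\X^*$ converge to $f$ in norm, hence weak-star, the countable set of finite coordinate-images of the $D_n$ is weak-star dense. The real content is that $\X$ itself has property (C), i.e. that Corson's property (C) is preserved under countable $c_0$-sums. The plan is to reduce to the finite sums $Y_N=\bigoplus_{n\le N}\X_n$, which retain property (C), and to diagonalize: given a bounded $A\subset\X^*$ and $x^*\in\overline{A}^{\mathrm{w}^*}$, the weak-star continuous projections $Q_N\colon\X^*\to Y_N^*$ give $Q_Nx^*\in\overline{Q_NA}^{\mathrm{w}^*}$, property (C) of $Y_N$ produces a countable $B_N\subset A$ realizing $Q_Nx^*$ in a weak-star closed convex hull, and one sets $A_0=\bigcup_N B_N$ and checks $x^*\in\overline{\conv}^{\mathrm{w}^*}(A_0)$.

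I expect the main obstacle to lie exactly in this preservation step, for two linked reasons. First, already the finite case needs a \emph{simultaneous} convex approximation across coordinates, since property (C) of each factor only controls one projection at a time and these must be merged into a single convex combination matching $x^*$ in all coordinates at once. Second, to pass from the finite levels to the full sum one must control the tail $\sum_{n>N}\langle\,\cdot\,,x_n\rangle$ of the convex combinations; this is where the $c_0$/$\ell^1$ structure enters, because a test vector $x\in\bigoplus_n\X_n$ has $\|x_n\|\to0$ while convex combinations stay bounded in the $\ell^1$-norm, so the tails vanish uniformly — which is why I phrase the diagonalization for bounded $A$ and reconcile it with the unbounded sets $A=\span(\mathcal{F})$ arising in the engine by invoking the bounded reformulation of property (C) and the boundedness of the $g_k$-approximations. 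A more topological alternative worth recording is that $(\B_{\X^*},\mathrm{w}^*)$ embeds homeomorphically into $\prod_n(\B_{\X_n^*},\mathrm{w}^*)$, being a continuous bijection from a weak-star compact space which is open precisely because the $c_0$-tails of test vectors vanish; this invites Theorem \ref{thm: main}, though the difficulty there shifts to transferring dense-separability from the product to this non-regular-closed subspace.
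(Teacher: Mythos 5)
Your route is structurally the same as the paper's. For the first assertion your argument is essentially identical to the paper's own (the paper passes to the $\Q$-linear span, you to the real span; in both cases dense-separability of $(\X^*,\mathrm{w}^*)$ gives a countable weak-star dense subset of the span, and collecting the finitely many generators of each of its elements yields the countable separating subfamily). For the second assertion you use the same decomposition as the paper: (i) the dual of the $c_0$-sum is the $\ell^1$-sum and is weak-star separable; (ii) the $c_0$-sum has property (C); (iii) property (C) together with a weak-star separable dual implies CSP. The difference is what is proved versus cited: the paper cites \cite{talponen} for (iii) and \cite{Pol} for (ii), whereas you prove (iii) outright and your proof is correct --- $g_k\in\overline{\conv}^{\mathrm{w}^*}(B_k)\subset\overline{\span}^{\mathrm{w}^*}(\mathcal{F}_k)$, and the weak-star closed subspace $\overline{\span}^{\mathrm{w}^*}\bigl(\bigcup_k\mathcal{F}_k\bigr)$ then contains the weak-star dense set $\{g_k\}$. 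Your verification of (i) is also fine.

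The genuine gap is step (ii), the preservation of property (C) under countable $c_0$-sums, which you yourself flag as the main obstacle; as written it rests on two unproved, nontrivial facts. First, that the finite sums $Y_N=\bigoplus_{n\le N}\X_n$ retain property (C) is asserted, but this is exactly the ``simultaneous convex approximation across coordinates'' you identify as a difficulty; it needs an argument (e.g.\ the three-space property of (C), or a direct merging of convex combinations) and cannot be assumed. Second, your diagonalization with tail control only verifies the \emph{bounded} form of the dual characterization of (C) for the sum, while your engine in (iii) applies property (C) to the unbounded sets $\span(\mathcal{F})$; the bridge you invoke as ``the bounded reformulation of property (C)'' is a real theorem (essentially Pol's characterization), not a formality --- note that a point in the weak-star closure of a linear subspace need not lie in the weak-star closure of any bounded part of it, since weak-star dense subspaces need not be norming, so one cannot reduce the unbounded case to the bounded one by truncation. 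The minimal repair is the one the paper makes: cite \cite{Pol} for the $c_0$-stability of property (C). With that citation your proposal becomes a complete proof, and in step (iii) it is in fact more self-contained than the paper's.
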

\begin{proof}
To verify the first part of the statement, let $\mathcal{F} \subset \X^* $ be a separating family. The separating property of $\mathcal{F} $ is equivalent to stating that $\overline{\span}^{  \mathrm{w}^* }(\mathcal{F} )= \X^*$.
Thus the same conclusion holds for the $\Q$-linear span. According to the weak-star dense-separability of $\X^*$ there is a countable subset $\Lambda \subset \span_\Q (\mathcal{F} )$ 
which is weak-star dense in $\X^*$. Consequently, there is a countable $\mathcal{F}_0 \subset  \mathcal{F}$ with $\overline{\span}^{  \mathrm{w}^* }(\mathcal{F}_0 )= \X^*$, so that 
$\mathcal{F}_0$ separates $\X$. 
  
To verify the second part of the statement, we first observe that $(\bigoplus_n \X_n )^* = \bigoplus_n \X_{n}^*$, where the latter infinite direct product is taken in the $\ell^1$-sense.
Thus $(\bigoplus_n \X_n )^*$ is weak-star separable. We use the fact that property $(C)$ is stable under $c_0$-direct sum, see \cite{Pol}. Thus, $\bigoplus_n \X_n$ has weak-star separable dual and has 
property (C). It is known that property (C) and weak-star separable dual imply CSP, see \cite{talponen}, so that the claim follows. 
\end{proof}

It is not known if Banach spaces with the CSP are preserved in finite direct sums in general.

\subsection*{Acknowledgments}

This work has been financially supported  by V\"ais\"al\"a foundation's and the Finnish Cultural Foundation's research grants and Academy of Finland Project \# 268009.

\end{document}